\numberwithin{equation}{section}
\numberwithin{figure}{section}
\theoremstyle{plain}
\theoremstyle{plain}
\newtheorem{corollary}{Corollary}
\newtheorem{proposition}{Proposition}
\newtheorem{theorem}{Theorem}
\numberwithin{equation}{section}
\begin{document}
	\title
	{Special vector fields on Riemannian manifolds of constant negative sectional curvature and conservation laws}
	\author{ Keti Tenenblat$^{1}$ and  Alice Barbora Tumpach$^2$}
\date{}

\footnotetext [1] {Universidade de Bras\'\i lia - UnB, Departamento de Matem\'atica, 70910-900, Bras\'\i lia--DF, Brazil, K. Tenenblat was partially suported by CNPq  Grant; 311916/2021-0 and CAPES/Brazil- Finance Code 001; FAPDF 00193-00001678/2024-39;  ESI, Austria, grant  I-5015N , e-mail: {K.Tenenblat@mat.unb.br}    }
	\footnotetext [2] {Institut CNRS Pauli, Vienna-Austria;  Laboratoire Painlev\'e, Lille, France,   was partially supported by  CAPES-Print, Brazil, grant DGP/UnB 001/2024, e-mail: {Barbara.Tumpach@math.cnrs.fr}  }
	
\maketitle 	

\begin{abstract}
We show that any $n$-dimensional Riemannian manifold with constant negative sectional curvature 
admits local orthonormal vector fields such that one of them $v_1$ is tangent to geodesics and the other $n-1$ vector fields are tangent to horocycles. We prove that the $1$-form	 dual to $v_1$ is a closed form. We show how the closed form can be used to obtain conservation laws for PDEs whose generic solutions define metrics on open subsets with constant negative  sectional curvature. These results extend to higher dimensions the $2$-dimensional case proved in the 1980s. We prove that there exist local coordinates on the manifold such that the coordinate curves are tangent to the orthonormal vector  fields. We apply the theory to obtain conservation laws for the Camassa-Holm equation ($n=2$) and for the Intrinsic Generalized Sine-Gordon equation ($n\geq 2$). 
		\end{abstract}
	\textit{Keywords:} {Riemannian manifolds; constant negative sectional curvature; closed forms, conservation laws, Camassa-Holm equation; Intrinsic Generalized sine-Gordon equation }\\ 
	\textit{Mathematics Subject Classification}: [2020] {35A25, 35L65, 58A15, 58J60}

	\section{Introduction} 
	
In \cite{CavalcanteTenenblat1988},\cite{ChernTenenblat}, the authors proved that any 2-dimensional Riemannian manifold, with constant negative Gaussian curvature, admits  orthonormal vector fields $v_1,\, v_2$ tangent to geodesics and horocycles respectively.  In particular, they showed that the 1-form dual to $v_1$ is a closed form. The importance of the closed form is due to the fact that it  provides conservation laws for partial differential equations (or system of equations) for real valued functions, whose generic solutions define metrics on open subset of the plane, whose Gaussian curvature is constant negative (w.l.o.g. $-1$). These are the so called differential equations   that describe pseudo-spherical surfaces ({\bf pss}). Many well know differential equations related to physical phenomena  decribe {\bf pss} such as Schr\"odinger equation, short-pulse equation, KdV, etc.  Actually there are infinitely many such differential equations. The reader can find an extensive literature with classification results of such PDEs in \cite{ChernTenenblat}-\cite{SilvaTenenblat2015} and references within. Explicit conservation laws have been recently obtained for example in \cite{KelmerTenenblat2022} and \cite{KelmerTenenblat2025} for some of these equations, by applying the results in \cite{CavalcanteTenenblat1988}, 

In this paper, we generalize the results mentioned above to higher dimensions. More precisely, we show that any $n$-dimensional Riemannian manifold $M^n$, whose sectional curvature is constant $-1$, admits local orthonormal vector fields $v_i$, $i=1,...,n$, such $v_1$ is tangent to  geodesics and $v_i$, $i\geq 2$ 
are tangent to horocycles. In particular, we show that the 1-form dual to $v_1$ is a closed form (Theorem \ref{Theotheta1iij}).  Moreover, we prove that there exist local coordinates on $M$ such that the coordinate curves are tangent to the vectors of the orthonormal frame (Theorem \ref{coorddimn}). 
In Theorem \ref{closedformL},  we  show how to obtain the closed form for any Riemannian  
manifold, with constant sectional curvature $-1$.
This closed form provides conservation laws for PDEs whose generic solutions define metrics on open subsets of $R^n$ with constant negative sectional curvature. 
 We apply the results in dimension $n=2$ to obtain  conservation laws for the Camassa-Holm equation \cite{Cam-Holm}. For arbitrary dimensions $n\geq 2$, in Theorem \ref{conservationIGSGE} we get conservation laws for the Intrinsic Generalized Sine-Gordon equation (IGSGE). This is an $n$-dimensional generalization of the classical sine-Gordon equation, whose generic solutions define metrics on open subsets of $R^n$, whose sectional curvature is $-1$ (see Example 2). The IGSGE was introduced by Beals-Tenenblat in \cite{BealsTenenblat1991} (see also Chapter  V in \cite{Tenenblat}), as an intrinsic version of the Generalized sine-Gordon equation that corresponds to submanifolds $M^n\subset R^{2n-1}$ \cite{TT}.   
 In higher dimensions,  very few equations or systems of equations are known to be integrable in some sense.  The IGSGE  is an $n$-dimensional system of  PDEs that has B\"acklund transformation, superposition formula, and it can also be solved  by the inverse scattering method \cite{BealsTenenblat1991}. 

This paper is organized as follows: In Section \ref{Mainresults} we state our main results, in Section \ref{proofs} we prove Theorems \ref{Theotheta1iij}, \ref{coorddimn}  and \ref{closedformL}. In Section
\ref{applications}, we   obtain conservation laws for the Camassa-Holm equation and  in the higher dimensional context,  we prove Proposition \ref{conservedn} 
that shows how to apply Theorems \ref{Theotheta1iij} and \ref{closedformL}  in order to obtain conservation laws  from the closed 1-form and then  we  prove  Theorem \ref{conservationIGSGE} for the IGSGE.

	\section{Main Results}\label{Mainresults}
	
 We consider an $n$-dimensional Riemannian manifold $(M^n,g)$, with constant negative sectional curvature which, without loss of generality, we may consider to be $-1$. We first recall the  $2$-dimensional case, which shows that a Riemannian manifold  $(M^2,g)$, with constant negative Gaussian 
 curvature admits  special vector fields, that are tangent to geodesics and horocycles.
 
 \vspace{.1in}

	\begin{theorem} {\rm \cite{CavalcanteTenenblat1988}  \cite{ChernTenenblat}} \label{closedform}
Let $M^2$ be a $C^\infty$ Riemannian surface. $M$ has constant Gaussian curvature $-1$ if, and only if, given orthonormal vectors $v_1^0,\, v_2^0$ tangent to $M$ at $p_0\in M$, there exists an orthonormal frame field 
$v_1,v_2$, locally defined, such that $v_i(p_0)=v_i^0, \, i=1,2$ and the associated 
dual 1-forms $\theta_1\, 
\theta_2$ and connection form $\theta_{12}$ satisfy 
\begin{equation} \label{theta12}
\theta_{12}+\theta_2=0.
\end{equation}  
In this case, $\theta_1$ is a closed form.	
	\end{theorem}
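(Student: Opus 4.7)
The plan is to work in a local orthonormal coframe via Cartan's structure equations and to parameterize every other orthonormal frame as a rotation of a fixed one. Pick any local orthonormal frame $e_1,e_2$ near $p_0$ with $e_i(p_0)=v_i^0$, and let $\omega_1,\omega_2$ be its dual coframe and $\omega_{12}$ its Levi-Civita connection form, so that on a surface one has
\[
d\omega_1=\omega_{12}\wedge\omega_2, \qquad d\omega_2=\omega_1\wedge\omega_{12}, \qquad d\omega_{12}=-K\,\omega_1\wedge\omega_2.
\]
Rotation by angle $\phi$ produces the new frame $v_1=\cos\phi\,e_1+\sin\phi\,e_2$, $v_2=-\sin\phi\,e_1+\cos\phi\,e_2$, whose dual forms and connection form are $\theta_1=\cos\phi\,\omega_1+\sin\phi\,\omega_2$, $\theta_2=-\sin\phi\,\omega_1+\cos\phi\,\omega_2$, and $\theta_{12}=\omega_{12}+d\phi$.

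For the easy direction, assume such a frame exists, so $\theta_{12}=-\theta_2$. The first structure equation yields $d\theta_1=\theta_{12}\wedge\theta_2=-\theta_2\wedge\theta_2=0$, which proves that $\theta_1$ is closed. Next, $d\theta_{12}=-d\theta_2=-\theta_1\wedge\theta_{12}=\theta_1\wedge\theta_2$, while the curvature equation gives $d\theta_{12}=-K\,\theta_1\wedge\theta_2$; comparing the two expressions forces $K=-1$ at every point where the frame is defined, hence on all of $M$ since $p_0$ was arbitrary.

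For the converse, the requirement $\theta_{12}+\theta_2=0$ translates into the total differential equation
\[
d\phi=-\omega_{12}+\sin\phi\,\omega_1-\cos\phi\,\omega_2,
\]
which I denote $d\phi=\tilde\alpha$. Viewing this as the Pfaffian system $\beta:=d\phi-\tilde\alpha=0$ on $M\times\mathbb{R}$, I apply the Frobenius theorem: a local solution $\phi$ with $\phi(p_0)=0$ exists if and only if $d\beta\equiv 0\pmod{\beta}$. Substituting the three structure equations into $d\tilde\alpha$ and using $d\phi\equiv\tilde\alpha$ to eliminate the $d\phi\wedge\omega_i$ terms, the $\omega_1\wedge\omega_{12}$ and $\omega_2\wedge\omega_{12}$ contributions should cancel in pairs while the $\omega_1\wedge\omega_2$ coefficients collapse via $\sin^2\phi+\cos^2\phi=1$, producing
\[
d\beta\equiv -(K+1)\,\omega_1\wedge\omega_2\pmod{\beta}.
\]
Thus integrability holds precisely when $K=-1$, and in that case Frobenius delivers a unique germ of $\phi$ with $\phi(p_0)=0$; the rotated frame then satisfies both $v_i(p_0)=v_i^0$ and $\theta_{12}+\theta_2=0$.

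The main obstacle is the integrability computation: it mixes trigonometric coefficients with the three structure equations, and the vanishing of the $\omega_i\wedge\omega_{12}$ pieces depends on a careful sign match between the $d\omega_i$ terms and the substitution $d\phi\equiv\tilde\alpha$. Once that identity is verified, both implications and the closedness of $\theta_1$ follow with essentially no further work.
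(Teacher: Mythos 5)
Your proposal is correct and follows essentially the same route as the paper: the paper proves the $n$-dimensional version (Theorem \ref{Theotheta1iij}) by showing the exterior ideal generated by $\theta_{1i}+\theta_i$ (and $\theta_{ij}$) is closed modulo itself iff $K=-1$ and invoking Frobenius, and in Section \ref{apln2} it writes out exactly your angle parametrization and Pfaffian equation for $\phi$ (with the opposite sign convention for the rotation, cf.\ \eqref{dphin2}). I verified your key computation $d\beta\equiv -(K+1)\,\omega_1\wedge\omega_2 \pmod\beta$; it is correct, so both directions and the closedness of $\theta_1$ go through as you describe.
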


\vspace{.2in}

We observe that the vector fields $v_1$ and $v_2$ are tangent to geodesics and to horocycles respectively. In fact, it follows from the fact that $dv_1=-\theta_2 v_2$ and $dv_2=\theta_2 v_1$.  We prove a higher dimensional version of the theorem above, whenever the Riemannian manifold has constant negative sectional curvature.

\begin{theorem}\label{Theotheta1iij}
Let $(M^n,g)$ be an $n$-dimensional Riemannian manifold. $M$ has constant sectional curvature $-1$ if, and only if, given $v_1^0, ...,v_n^0$ orthonormal vectors tangent to $M$ at $p_0\in M$, there exists an orthonormal 
frame field $v_1, ..., v_n$, locally defined, such that $v_i(p_0)=v_i^0, \, i=1,...,n$ and the associated dual forms $\theta_1, ...,\theta_n$ and connection form $\theta_{ij}$ satisfy
\begin{eqnarray}
&& \theta_{1i}+\theta_i=0,\qquad \forall\, i\geq 2, \label{theta1i}\\ 
&& \theta_{ij}=0, \qquad\qquad  \forall \, i\neq j, \;  2\leq i,j\leq n.\label{thetaij}
\end{eqnarray}
In this case, $\theta_1$ is a closed form. In particular, $v_1$ is tangent to geodesics and $v_i$, $i\geq 2$ are tangent to horocycles. 
\end{theorem}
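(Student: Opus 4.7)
I plan to prove Theorem \ref{Theotheta1iij} by the method of moving frames, using Cartan's structure equations.

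For the \emph{if} direction, I substitute the hypotheses (\ref{theta1i}) and (\ref{thetaij}) into the first structure equation $d\theta_i=\sum_j\theta_{ij}\wedge\theta_j$. For $i=1$, every term is of the form $\theta_{1j}\wedge\theta_j=-\theta_j\wedge\theta_j=0$, so $\theta_1$ is closed; for $i\geq 2$, only the $j=1$ term survives and yields $d\theta_i=\theta_i\wedge\theta_1$. Substituting into the second structure equation $\Omega_{ij}=d\theta_{ij}-\sum_k\theta_{ik}\wedge\theta_{kj}$, and using (\ref{theta1i})--(\ref{thetaij}) to kill the sums, gives $\Omega_{1i}=\theta_1\wedge\theta_i$ and, for $2\leq i\neq j\leq n$, $\Omega_{ij}=\theta_{i1}\wedge\theta_{1j}\cdot(-1)=\theta_i\wedge\theta_j$. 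Since $\Omega_{ij}=\theta_i\wedge\theta_j$ for every pair, the sectional curvature is identically $-1$.

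For the \emph{only if} direction, I start with any local orthonormal frame $e_1,\ldots,e_n$ on a neighborhood $U$ of $p_0$ with $e_i(p_0)=v_i^0$, and look for a smooth function $A:U\to SO(n)$ with $A(p_0)=I$ such that the rotated frame $v_i=\sum_j A_{ij}e_j$ satisfies (\ref{theta1i})--(\ref{thetaij}). The graph of $A$ in $U\times SO(n)$ is then an integral manifold of the Pfaffian system defined by $\alpha_i=\theta_{1i}+\theta_i$ ($i\geq 2$) and $\beta_{ij}=\theta_{ij}$ ($2\leq i<j\leq n$), where the forms $\theta_i$ and $\theta_{ij}$ are regarded as 1-forms on $U\times SO(n)$ via the standard expressions in $A$, the $\omega_k$'s and $\omega_{kl}$'s, and the Maurer-Cartan form of $SO(n)$. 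These 1-forms cut out a rank-$n$ distribution, and Frobenius' theorem applies once integrability is verified.

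The integrability check is the heart of the proof. A direct computation, reducing modulo the system, yields $d\alpha_i\equiv\Omega_{1i}-\theta_1\wedge\theta_i$ and $d\beta_{ij}\equiv\Omega_{ij}-\theta_i\wedge\theta_j$. The constant sectional curvature $-1$ hypothesis, which gives $\Omega_{kl}=\theta_k\wedge\theta_l$ for all $k\neq l$, is exactly what annihilates both expressions. Frobenius' theorem then produces an integral leaf through $(p_0,I)$; after shrinking $U$ if necessary, it projects diffeomorphically onto $U$ and defines the required $A$, and hence the frame $v_1,\ldots,v_n$. The geometric consequences follow immediately from the connection: $\nabla_{v_1}v_1=\sum_{j\geq 2}\theta_{1j}(v_1)v_j=-\sum_{j\geq 2}\theta_j(v_1)v_j=0$, so integral curves of $v_1$ are geodesics; and for $j\geq 2$, $\nabla_{v_j}v_j=\theta_{j1}(v_j)v_1=v_1$ while $\nabla_{v_j}v_i=0$ for $i\notin\{1,j\}$, so each integral curve of $v_j$ has unit geodesic curvature pointing at $v_1$ with all other normal components parallel, hence is a horocycle. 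The main obstacle is the index-heavy Frobenius integrability computation; conceptually the argument mirrors the 2-dimensional proof of Theorem \ref{closedform}, where $SO(2)$ is parametrized by a single angle and the integrability reduces to one scalar PDE.
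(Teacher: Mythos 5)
Your proposal is correct and follows essentially the same route as the paper: the existence direction is the same Frobenius/exterior-differential-systems argument on the frame bundle, using the ideal generated by $\theta_{1i}+\theta_i$ and $\theta_{ij}$, and your reductions $d\alpha_i\equiv\Omega_{1i}-\theta_1\wedge\theta_i$ and $d\beta_{ij}\equiv\Omega_{ij}-\theta_i\wedge\theta_j$ modulo the system match the paper's computation exactly, as do the closedness of $\theta_1$ and the geodesic/horocycle identification. The only (harmless) differences are that you make the ambient manifold $U\times SO(n)$ and the converse (substitution) direction explicit, which the paper leaves implicit.
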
 

The existence of  special frames on a Riemannian manifold with constant negative sectional curvature, as in Theorem \ref{Theotheta1iij}, will enable us to show that one can locally parametrize the manifold with coordinates whose tangent vectors are in the direction of the frame.

\begin{theorem}\label{coorddimn}
Let $(M^n,g)$ be a Riemannian manifold of constant sectional curvature $-1$. Let $v_1,..., v_n$ be an orthonormal fame field locally defined on $M$ such that the dual forms $\theta_1,...,\theta_n$ and the connection forms $\theta_{ij}$  satisfy \eqref{theta12} if $n=2$ and  
\eqref{theta1i}-\eqref{thetaij} if $n>2$. Then there exist local coordinates $y_1,...,y_n$ such that $\partial/\partial y_1=v_1$ and $\partial/\partial y_i=r_iv_i$, $i\geq 2$, where $r_i$  is a funcion of $y_1$ only.
\end{theorem}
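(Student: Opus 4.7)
The plan is to translate the algebraic hypotheses \eqref{theta1i}--\eqref{thetaij} (or \eqref{theta12} if $n=2$) into two simple exterior differential identities for the coframe $\theta_1,\ldots,\theta_n$, and then to integrate them: first directly via the Poincar\'e lemma for $\theta_1$, and then via an integrating factor for the remaining forms $\theta_i$, $i\ge 2$.

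\emph{Step 1.} Substitute the hypotheses into Cartan's first structure equations $d\theta_i = -\sum_j \theta_{ij}\wedge\theta_j$. Using $\theta_{1i}=-\theta_i$, the antisymmetry $\theta_{ji}=-\theta_{ij}$, and $\theta_{ij}=0$ for $2\le i\ne j\le n$, a short calculation yields
\begin{equation*}
d\theta_1 = 0, \qquad d\theta_i = \theta_1\wedge\theta_i \quad (i\ge 2).
\end{equation*}
The closedness of $\theta_1$ is already part of Theorem~\ref{Theotheta1iij}; the case $n=2$ gives the same two relations from \eqref{theta12}.

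\emph{Step 2.} Since $\theta_1$ is closed, the Poincar\'e lemma produces locally a smooth function $y_1$ with $\theta_1=dy_1$. Plugging this into the second identity of Step~1 gives $d\theta_i = dy_1\wedge\theta_i$, which is equivalent to $d(e^{-y_1}\theta_i)=0$. A second application of the Poincar\'e lemma then yields smooth functions $y_2,\ldots,y_n$ with $e^{-y_1}\theta_i = dy_i$, so that $\theta_i = e^{y_1}\,dy_i$ for each $i\ge 2$.

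\emph{Step 3.} At each point of the relevant neighborhood the $1$-forms $dy_1,\ldots,dy_n$ span the same cotangent space as the coframe $\theta_1,\ldots,\theta_n$; hence they are linearly independent and $(y_1,\ldots,y_n)$ is a genuine local coordinate system. Reading off the dual frame from $dy_j(\partial/\partial y_k)=\delta_{jk}$ together with $\theta_1=dy_1$ and $\theta_i=e^{y_1}dy_i$ gives $\partial/\partial y_1 = v_1$ and $\partial/\partial y_i = e^{y_1}\,v_i$ for $i\ge 2$; thus $r_i(y_1)=e^{y_1}$ depends on $y_1$ only, as required. No genuine obstacle arises --- the only care needed is with the sign conventions in the structure equations, which at worst replaces $e^{y_1}$ by $e^{-y_1}$ throughout but preserves the essential property that $r_i$ is a function of $y_1$ alone.
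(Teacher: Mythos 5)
Your proof is correct, and it takes a genuinely different route from the paper's. You work entirely on the coframe side: from \eqref{theta1i}--\eqref{thetaij} and the first structure equation you extract $d\theta_1=0$ and $d\theta_i=\pm\,\theta_1\wedge\theta_i$, then integrate by two applications of the Poincar\'e lemma, the second after multiplying by the integrating factor $e^{\mp y_1}$; the coordinates and the explicit formula $r_i=e^{\mp y_1}$ drop out immediately from duality. The paper instead works on the frame side: it posits unknown scaling functions $r_i$, computes the brackets $[v_1,r_iv_i]$ and $[r_iv_i,r_jv_j]$ through the identities $\theta_{1i}=\langle dv_1,v_i\rangle$, $\theta_{ij}=\langle dv_i,v_j\rangle$, derives the conditions $d(\log r_i)=-\theta_1$ and $dr_i(v_j)=0$, solves them as $r_i=c_i\exp(-G)$ with $dG=\theta_1$, and finally invokes the canonical-form theorem for commuting linearly independent vector fields. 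The two arguments are dual formulations of the same fact; yours is shorter, avoids the bracket computations and the appeal to the commuting-frame theorem, and yields the coordinates and $r_i$ explicitly, while the paper's bracket computations reuse machinery (the relations \eqref{dv1dvi}) that it has already set up for the geometric interpretation of the frame. Your caveat about the sign convention is well placed: with the paper's convention $d\omega_i=\sum_{j\neq i}\omega_j\wedge\omega_{ji}$ one gets $d\theta_i=-\theta_1\wedge\theta_i$, hence $\theta_i=e^{-y_1}dy_i$ and $r_i=e^{-y_1}$, matching the paper's $r_i=c_i\exp(-G)$; either way $r_i$ depends on $y_1$ alone, which is all the statement requires.
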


As an immediate corollary of Theorems \ref{Theotheta1iij} and \ref{coorddimn} we have 

\begin{corollary}\label{coordinates}
Let $(M^n,g)$ be a Riemannian manifold of constant sectional curvature $-1$.
Then given orthonormal vectors   
 $v_1^0,..., v_n^0$  at a point $p^0\in M$,  there exist local orthogonal coordinates $y_1,...,y_n$ such that the curves that are  tangent to $\partial/\partial y_1$ are geodesics and  the curves tangent to $\partial/\partial y_i$, $i\geq 2$, are horocycles and at $p^0$ they are tangent to $v_1^0,..., v_n^0$. 
 \end{corollary}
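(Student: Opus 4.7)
The plan is to combine Theorems \ref{Theotheta1iij} and \ref{coorddimn} directly. Given the orthonormal vectors $v_1^0, \ldots, v_n^0$ at $p^0 \in M$, I first apply Theorem \ref{Theotheta1iij} to produce a local orthonormal frame $v_1, \ldots, v_n$ defined on a neighborhood of $p^0$, satisfying $v_i(p^0) = v_i^0$ together with the structural equations \eqref{theta1i}--\eqref{thetaij}. By the concluding assertion of that theorem, $v_1$ is tangent to geodesics and each $v_i$, $i \geq 2$, is tangent to horocycles.

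Next, I apply Theorem \ref{coorddimn} to this frame to produce local coordinates $(y_1, \ldots, y_n)$ around $p^0$ with $\partial/\partial y_1 = v_1$ and $\partial/\partial y_i = r_i(y_1)\, v_i$ for $i \geq 2$. The orthogonality of the coordinate system is immediate: since $\{v_1, \ldots, v_n\}$ is orthonormal and each $\partial/\partial y_i$ is a (non-vanishing) scalar multiple of $v_i$, the coordinate vector fields are pairwise orthogonal, so the metric is diagonal in these coordinates. The tangency claims then follow: the $y_1$-coordinate curves are integral curves of $v_1$, hence geodesics, while the $y_i$-coordinate curves for $i \geq 2$ are reparametrizations of integral curves of $v_i$ (the factor $r_i(y_1)$ affects only the speed, not the trace), hence horocycles.

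Finally, at the point $p^0$ one has $\partial/\partial y_1(p^0) = v_1(p^0) = v_1^0$ and $\partial/\partial y_i(p^0) = r_i(p^0)\, v_i^0$ for $i \geq 2$, so each coordinate vector at $p^0$ is a positive multiple of the prescribed $v_i^0$, which is exactly what ``tangent to $v_i^0$'' requires. Since the corollary is a direct packaging of the two preceding theorems, there is no genuine obstacle; the only point demanding attention is to notice that \emph{tangent to} in the statement must be read as a direction, since $\partial/\partial y_i$ generally differs from $v_i^0$ at $p^0$ by the scalar factor $r_i(p^0)$.
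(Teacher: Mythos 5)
Your proposal is correct and follows exactly the route the paper intends: the paper states this result as an immediate corollary of Theorems \ref{Theotheta1iij} and \ref{coorddimn} without further argument, and your write-up simply makes that combination explicit. Your added remark that ``tangent to $v_i^0$'' must be read directionally because of the scalar factor $r_i(p^0)$ is a correct and worthwhile clarification, not a deviation.
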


Whenever $n=2$,  Theorem \ref{closedform}  has been applied to obtain conservation laws  for differential equations that describe pseudo-spherical surfaces, i.e.,  differential equations or system of equations for real valued functions defined on open subsets of the plane, whose generic solutions define metrics with Gaussian curvature $-1$. 

In order to consider such an application for higher dimensions, i.e., for systems of differential equations whose generic solutions define metric on open subsets of $R^n$ with sectional curvature $-1$, we state our next result,  that shows how to obtain the closed form given by Theorem \ref{Theotheta1iij}.

\begin{theorem}\label{closedformL}
Let $(U\subset R^n,g)$ be an open subset $U$ with  a Riemannian metric $g$,  
whose sectional curvature is constant $-1$ and let $e_1,...,e_n$ be any orthonormal frame 
with dual and connection forms $\omega_i$ and $\omega_{ij}$ respectively. Then there exists a unique orthonormal frame $v_i=L_{ij} e_j$, $L(x)\in O(n)$, for a given initial condition $L(x^0)\in O(n)$, $x^0\in U$,  such that $L$ satisfies the integrable system of PDEs  
\begin{eqnarray}
&& (dL\, L^t)_{1i}+(LWL^t)_{1i}+\sum_{k=1}^n L_{ik}\omega_k=0,\qquad \forall i\geq 2, \label{dL1j}\\
&& (dL\, L^t)_{ij}+(LWL^t)_{ij}=0, \qquad \qquad \forall\, i, j \geq 2, \, i\neq j, \label{dLij}
\end{eqnarray}
where $(W)_{ij}=\omega_{ij}$. 
In this case, $\sum_{k=1}^n L_{1k} \omega_k$ is a closed form.
\end{theorem}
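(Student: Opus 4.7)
The plan to prove Theorem \ref{closedformL} is to recognize it as the matrix reformulation of Theorem \ref{Theotheta1iij} relative to the fixed reference frame $e_1,\ldots,e_n$. The strategy is to derive how the dual and connection forms transform under the change of orthonormal frame $v_i=L_{ij}e_j$ (with $L\in O(n)$), check that the conditions \eqref{theta1i}--\eqref{thetaij} on the new frame translate exactly into equations \eqref{dL1j}--\eqref{dLij} on $L$, and then invoke Theorem \ref{Theotheta1iij} to conclude existence, uniqueness, integrability, and closedness in one stroke.

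The dual coframe satisfies $\theta_i=\sum_{k=1}^n L_{ik}\omega_k$, which is immediate from $\theta_i(v_j)=\delta_{ij}$ together with $LL^t=I$. For the connection forms, starting from $\nabla_X e_j=\sum_{k=1}^n \omega_{jk}(X)e_k$ and $\nabla_X v_i=\sum_{j=1}^n \theta_{ij}(X)v_j$, applying the Leibniz rule to $v_i=L_{ij}e_j$ and then re-expressing $e_k$ in the $v$-frame via $e_k=\sum_\ell L_{\ell k}v_\ell$ yields the matrix identity of $1$-forms
\begin{equation*}
\Theta \;=\; dL\cdot L^t \,+\, LWL^t,
\end{equation*}
where $\Theta_{ij}=\theta_{ij}$ and $W_{ij}=\omega_{ij}$. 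Both summands are skew-symmetric (the first because $d(LL^t)=0$ forces $dL\cdot L^t+L\cdot dL^t=0$), consistent with $\theta_{ij}=-\theta_{ji}$. Since the $(1,i)$ entries with $i\geq 2$ together with the $(i,j)$ entries with $2\leq i<j\leq n$ exhaust the independent components of an $\mathfrak{so}(n)$-valued matrix, the system \eqref{dL1j}--\eqref{dLij} prescribes every entry of the skew matrix $dL\cdot L^t$ in terms of $L$, $W$, and $\omega$.

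Substituting these transformation formulas into \eqref{theta1i} and \eqref{thetaij} produces precisely \eqref{dL1j} and \eqref{dLij}, and the correspondence is reversible: any $L\in O(n)$ solving \eqref{dL1j}--\eqref{dLij} defines an orthonormal frame $v_i=L_{ij}e_j$ satisfying \eqref{theta1i}--\eqref{thetaij}. Existence and uniqueness of $L$ with prescribed initial value $L(x^0)\in O(n)$ are therefore equivalent to existence and uniqueness of the frame $(v_i)$ with $v_i(x^0)=\sum_j L(x^0)_{ij}e_j(x^0)$ satisfying those conditions, which is furnished by Theorem \ref{Theotheta1iij}; in particular, the integrability of the PDE system is inherited from that theorem rather than verified directly via a Frobenius calculation. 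Finally, since $\sum_{k=1}^n L_{1k}\omega_k=\theta_1$, the closedness of this $1$-form is exactly the closedness of $\theta_1$ already asserted in Theorem \ref{Theotheta1iij}. The main step requiring genuine care is the sign and index bookkeeping behind the transformation law $\Theta=dL\cdot L^t+LWL^t$; once that identity is in hand, the remainder of the argument is a direct translation between the frame picture and the matrix picture.
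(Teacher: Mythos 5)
Your proposal is correct and follows essentially the same route as the paper: the paper proves the transformation law $\theta_i=\sum_j L_{ij}\omega_j$, $\theta_{ij}=(dL\,L^t)_{ij}+(LWL^t)_{ij}$ as a separate proposition (via $\theta_i(v_\ell)=\delta_{i\ell}$ and $\theta_{ij}=\langle dv_i,v_j\rangle$, the same computation you carry out with the Leibniz rule) and then deduces the theorem by substituting into the conditions of Theorem \ref{Theotheta1iij}, exactly as you do. Your write-up is in fact somewhat more explicit than the paper's one-line proof about why the system \eqref{dL1j}--\eqref{dLij} captures all independent components of the skew-symmetric matrix and why integrability and closedness are inherited.
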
 

The existence of a closed form in higher dimensions provides conservation laws, as one can see in Proposition  \ref{conservedn}, when  we fix one of the independent variables to be the time variable. As an important application, we  consider the  Intrinsic Generalized sine-Gordon equation, which is a system of differential equations for  a pair $\{V(x),h(x) \}$ defined on an open set $x\in U\subset R^n$, $n\geq 2$, where $V(x)=(V_1(x),...,V_n(x))$ is a unit vector field and $h_{ij}(x)$ is an off diagonal $n\times n$ matrix valued function satisfying \eqref{IGSGE} (see Example 2). This equation reduces to the sine-Gordon equation when $n=2$.  By applying  Theorem \ref{closedformL}, we prove the following result.

\begin{theorem}\label{conservationIGSGE} The Intrinsic Generalized sine-Gordon equation admits 
at least $n-1$ conservation laws, considering one of the independent variables to be the time variable. 
\end{theorem}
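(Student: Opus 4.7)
The plan is to apply Theorem \ref{closedformL} to the orthonormal frame canonically associated with a generic solution of the IGSGE, and then to read off $n-1$ independent conservation laws from the single closed $1$-form it produces. A solution $\{V,h\}$ of \eqref{IGSGE} on $U\subset R^n$ equips $U$ with a metric of constant sectional curvature $-1$ together with an explicit orthonormal frame $e_1,\ldots,e_n$ whose dual forms $\omega_i$ and connection forms $\omega_{ij}$ are recoverable from $V$ and $h$ as in Example 2. Fixing a basepoint $x^0\in U$ and any initial matrix $L(x^0)\in O(n)$, Theorem \ref{closedformL} furnishes a unique solution $L$ of \eqref{dL1j}--\eqref{dLij}, and the associated $1$-form $\theta=\sum_{k=1}^n L_{1k}\,\omega_k$ is closed.

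Next, I would designate one of the independent variables as the time variable $t$ and call the others $x_1,\ldots,x_{n-1}$. Writing $\theta=A\,dt+\sum_{j=1}^{n-1} B_j\,dx_j$ in these coordinates and projecting the identity $d\theta=0$ onto $dt\wedge dx_j$, one obtains
\[
\partial_t B_j \;=\; \partial_{x_j} A, \qquad j=1,\ldots,n-1,
\]
which is precisely the content of Proposition \ref{conservedn}. Each relation is a conservation law in the standard form $\partial_t(B_j)+\partial_{x_j}(-A)=0$, with density $B_j$ and flux $-A$; the $n-1$ laws are linearly independent because their flux components sit in distinct spatial slots.

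The main thing to verify is non-triviality: the closed form $\theta$ must not degenerate on solutions $\{V,h\}$. This follows from the fact that the first row of $L$ is a unit vector in every fiber, so $\theta$ has constant pointwise length equal to $1$ in the induced metric and cannot vanish identically. Combined with the explicit expressions for $\omega_i$ in $V,h$ from Example 2, this yields $n-1$ independent, non-trivial conservation laws for the IGSGE, as claimed. The main obstacle is purely computational, namely carrying out the substitution of the explicit dual and connection forms into \eqref{dL1j}--\eqref{dLij} to extract usable density/flux formulas; the conceptual content is entirely packaged in Theorem \ref{closedformL} and Proposition \ref{conservedn}.
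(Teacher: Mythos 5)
Your proposal is correct and follows essentially the same route as the paper: apply Theorem \ref{closedformL} to the frame $\omega_i=V_i\,dx_i$, $\omega_{ij}=h_{ij}dx_j-h_{ji}dx_i$ determined by a solution $\{V,h\}$ to obtain the closed form $\theta=\sum_k L_{1k}V_k\,dx_k$, then invoke Proposition \ref{conservedn} with one variable as time to read off the $n-1$ relations $\partial_t(L_{1k}V_k)-\partial_{x_k}(L_{11}V_1)=0$. Your added observation that $\theta$ is nowhere zero because the first row of $L$ is a unit vector is a nice non-degeneracy check that the paper leaves implicit.
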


 \section{Proof of the Main Results}\label{proofs}
In this section, we prove some of the results stated in Section \ref{Mainresults}. In order to do so, 
we need the following basic facts. Let $(M^n,g)$ be a Riemannian manifold of constant sectional curvature $K$.  Consider a local orthonormal frame field $e_1,...,e_n$. Let $\omega_1,...,\omega_n$ be its dual coframe and 
let $\omega_{ij}=-\omega_{ji}$ be the connection forms. Then the structure equations for $M$ 
are
\begin{eqnarray}
& & d\omega_i=\sum_{j\neq i, \,j=1}^n\omega_j\wedge \omega_{ji}, \label{domegai}\\ 
& & d\omega_{ij}=\sum_{k=1}^n\omega_{ik}\wedge \omega_{kj} +  \Omega_{ij}
\label{domegaij} 
\end{eqnarray} 
where the curvature  $\Omega_{ij}=-K \omega_i\wedge \omega_j$  characterizes the fact that the sectional curvature is constant $K$.

\vspace{.1in} 

In what follows,   
we will use the notion of {\em vector valued differential forms}  on a manifold.  In particular, 
let $v:M^n\rightarrow TM$ be a vector field on $M$, where $TM$ is the tangent bundle of $M$. One can consider $v_p=\sum_{i=1}^n v^i(p) \, e_i(p)$, where $e_i(p)$ is a basis of the tangent space $T_pM$. Then $dv:TM\rightarrow TM$ is a vector valued differential form given by $dv(X)=\sum_{i=1}^n d v^i(X) \, e_i,$  where $dv^i$ is a 1-form and $X$ is any tangent vector field on $M$.  Just as in the case of ordinary differential forms, one can define operations on vector valued forms such as addition, multiplication by a function, wedge product and exterior derivatives  acting component-wise relative to any basis of the vector space. In particular, denoting the metric by $<,>$,  whenever two vector fields $v_i$ and $v_j$ are such that $<v_i,v_j>$ is constant, then 
$
<dv_i(v_k),v_j>+<v_i,dv_j(v_k)>=0, 
$
for any vector field $v_k$.

\vspace{.1in}


\begin{proof}[\bf Proof of Theorem \ref{Theotheta1iij}]
We have  to prove that the system of equations \eqref{domegai} and \eqref{domegaij} is integrable 
if, and only if, the sectional curvature $K$  of $M$ is constant $-1$. In order to do so, we will use Cartan-K\"ahler theory on exterior differentiable systems \cite{Cartan}. 

Let $\cal{I}$ be the ideal generated by $\gamma_i=\theta_{1i}+\theta_i$ and $\beta_{ij}=\theta_{ij}$, $i\neq j$, $i,j\geq 2$.  Then, it follows from \eqref{domegai} and \eqref{domegaij}, that  
\begin{eqnarray} 
&& d\gamma_i=d\theta_{1i}+d\theta_i= d\theta_{1i}+\sum_{k=1}^n\theta_k\wedge \theta_{ki}\nonumber\\
&& \qquad = d\theta_{1i}+\theta_1\wedge \theta_{1i}+\sum_{k=2}^n\theta_k\wedge \theta_{ki}\nonumber\\
&& \qquad = d\theta_{1i}+\theta_1\wedge(\gamma_i-\theta_i)+\sum_{k=2}^n(\gamma_k-\theta_{1k})\wedge \theta_{ki}\nonumber\\
&& \qquad = d\theta_{1i}-\sum_{k=2}^n \theta_{1k}\wedge \theta_{ki}-\theta_1\wedge \theta_i \quad \mbox{( mod $\cal{I}$)}.\label{lastdgammai} 
\end{eqnarray}
Similarly, for $i,j\geq 2$
\begin{eqnarray} 
&& d\beta_{ij}=d\theta_{ij}=\sum_{k=1}^n\theta_{ik}\wedge \theta_{kj}+\Omega_{ij}\nonumber\\
&&\qquad = \theta_{i1}\wedge \theta_{1j}+\sum_{k=2}^n\theta_{ik}\wedge \theta_{kj}+\Omega_{ij}\nonumber\\
&& \qquad =-(\gamma_i-\theta_i)\wedge (\gamma_{j}-\theta_{j})+\sum_{k=2}^n\beta_{ik}\wedge \beta_{kj}+\Omega_{ij}\nonumber \\
&&\qquad = - \theta_i\wedge\theta_j +\Omega_{ij} \quad \mbox{( mod $\cal I$)}.\label{lastdbetaij}  
 \end{eqnarray}
Therefore, $d\beta_{ij}=0$ mod $\cal{I}$ whenever $\Omega_{ij} =\theta_i\wedge \theta_j $, 
for all $i,j\geq 2$.  Hence, it follows from \eqref{lastdgammai} and \eqref{lastdbetaij} that 
$\cal{I}$ is closed under exterior differentiation if and only if $M$ has constant sectional curvature $K=-1$. The first part of the theorem follows from Frobenius theorem. 
  
  We now prove that $\theta_1$ is a closed form. In fact, it  follows from \eqref{theta1i} and the structure equation \eqref{domegai} that
  \[
  d\theta_1=-\sum_{k=2}^n\theta_k\wedge \theta_{1k}=\sum_{k=2}^n\theta_k\wedge \theta_{k}=0.
  \]
  
  We observe  that as a consequence of \eqref{theta1i} and \eqref{thetaij} we have  $dv_1=-\sum_{i=2}^n\theta_i v_i$ and  $dv_i=\theta_i v_1$. Hence, $dv_1(v_1)=0$ and $dv_i(v_i)=v_1$ Therefore, the vector fields $v_1,...,v_n$ have the following property:  $v_1$ is tangent to  geodesics and   $v_i$, $i\geq 2$ are tangent to horocycles.

\end{proof}

Our next proof shows that manifolds of constant negative sectional curvature admit 
special coordinate systems locally defined as in Theorem \ref{coorddimn}.

\vspace{.2in}

\begin{proof}[\bf Proof of Theorem \ref{coorddimn}]
We start proving the 2-dimensional case, i.e., let $M^2$ be a Riemannian surface. 
 Let $v_1,\, v_2$ be an orthonormal frame field locally defined on $M$ such that the dual forms $\theta_1,\, \theta_2$ and the connection form $\theta_{12}$ satisfy \eqref{theta12}. 
 It follows from Theorem \ref{closedform} that $\theta_{1}$ is a closed form. Therefore, there exists a function $G$ locally defined such that 
 \begin{equation}\label{G}
 dG= \theta_1.
\end{equation}  
We need to show that there exits a function $r$ such that  $v_2(r)=0$ and the vector fields $v_1$ and 
$rv_2$ commute, i.e.,  
\begin{equation}\label{ry1}
dr(v_2)=0 \qquad \mbox{ and } \qquad [v_1,\, rv_2]=0. 
\end{equation}

Since $\theta_{12}=<dv_1,v_2>$ it follows from \eqref{theta12} that 
\begin{equation}\label{dv1}
\begin{array}{l}
<dv_1(v_1),v_2>\;=\theta_{12}(v_1)\;=-\theta_2(v_1)=\;0,\\
<dv_1(v_2),v_2>\;=\theta_{12}(v_2)\;=-\theta_2(v_2)=\,-1.\\
\end{array}
\end{equation}
The second equality of \eqref{ry1} is equivalent to   
\[
dr(v_1)v_2+rdv_2(v_1)-rdv_1(v_2)=0.
\]
This is a vector field that vanishes when its inner product with the basis vanishes.  Taking  inner products of this expression with $v_1$ and $v_2$, we get from \eqref{dv1} respectively
\begin{equation}\label{dlogr}
\begin{array}{l}
   r <dv_2(v_1),v_1>-r<dv_1(v_2),v_1>= -r< v_2, dv_1(v_1)>=0, \\ 
dr(v_1)-r<dv_1(v_2),v_2>=dr(v_1)+r=0.
\end{array}
\end{equation}
Therefore, $d (\log r)(v_1)=-1$. Moreover, from the first equality of  \eqref{ry1} we have $d (\log r)(v_2)=0$.
Since $d (\log r)= d (\log r)(v_1)\theta_1+d (\log r)(v_2)\theta_2$, we conclude from 
\eqref{dlogr} that $d(\log r)=-\theta_1$. It follows from \eqref{G} that  
\[
r=c\exp(-G),
\] 
for some constant $c>0$, i.e., the function $r$ exists satisfying \eqref{ry1}. Therefore, there exit coordinates $y_1$ and $y_2$ locally defined such that  $\partial/\partial y_1=v_1$ and $\partial/\partial y_2=rv_2$, where $r$  is a funcion of $y_1$ only.

\vspace{.1in}

We  now prove the $n$-dimensional case. 
Let $v_1,..., v_n$ be an orthonormal frame field locally defined on $(M^n,g) $ such that the dual forms $\theta_1,...,\theta_n$ and the connection forms $\theta_{ij}$ satisfy \eqref{theta1i} and \eqref{thetaij}. 
 It follows from Theorem \ref{Theotheta1iij} that $\theta_{1}$ is a closed form. Therefore, there exists a function $G$ locally defined such that 
 \begin{equation}\label{Gn}
 dG= \theta_1.
\end{equation}  
We need to show that there exit  functions $r_j\neq 0$, $j\geq 2$,  such  $v_i(r_j)=0$, for $i,j\geq 2$  and the vector fields $v_1,r_2v_2,...,r_n v_n$ commute, i.e.,  
\begin{equation}\label{ry1n}
dr_i(v_j)=0 \qquad [v_1,\, r_iv_i]=0 \quad \mbox{ and } \quad [r_iv_i,r_jv_j]=0, \quad \forall i,j\geq2. 
\end{equation}

Since 
\begin{equation}\label{theta1iijP}
\theta_{1i}=<dv_1,v_i> \mbox{ and } \quad \theta_{ij}=<dv_i,v_j>,
\end{equation}
 it follows from \eqref{theta1iijP}, \eqref{theta1i}
and \eqref{thetaij}  that for all $i\neq j$, $i,j\geq 2$ we have 
\begin{equation}\label{dv1dvi}
\begin{array}{l}
<dv_1(v_1),v_i>\;=\theta_{1i}(v_1)\;=-\theta_i(v_1)=\;0,\\
<dv_1(v_i),v_i>\;=\theta_{1i}(v_i)\;=-\theta_i(v_i)=\,-1,\\
<dv_1(v_i),v_j>\;=\theta_{1j}(v_i)\;=-\theta_i(v_j)=\,0,\\
<dv_i(v_k),v_j>\;=\theta_{ij}(v_k)\;=\,0, \; \forall k.
\end{array}
\end{equation}

The second and third equalities of  \eqref{ry1n} are equivalent to 
\begin{eqnarray}
&& dr_i(v_1)v_i+r_idv_i(v_1)-r_idv_1(v_i)=0, \label{commut1i}\\
&& r_idr_j(v_i)v_j+r_ir_j dv_j(v_i)-r_jdr_i(v_j)v_i-r_jr_idv_i(v_j)=0, \quad i,j\geq 2.\label{commutij}
\end{eqnarray}
The expressions given by \eqref{commut1i} and \eqref{commutij}are  vector fields that vanish 
whenever the inner product with all vectors $v_1,...,v_n$ vanish. 
 
Taking inner product of \eqref{commut1i} with $v_1$ and $v_j, \; j\geq 2$, 
 from \eqref{dv1dvi} and \eqref{theta1iijP} and the fact that $<dv_1(v_i),v_1>=0$, we get that 
\begin{equation*}\label{dlogrn}
\left\{ \begin{array}{l}
r_i<dv_i(v_1),v_1>= - r_i<dv_1(v_1),v_i>=0, \\
dr_i(v_1)\delta_{ij} +r_i\delta_{ij}=0, 
\end{array}\right.
\end{equation*}
respectively.  Therefore, $[v_1,\, r_iv_i]=0$ if and only if 
\begin{equation}\label{dlogriv1}
d(\log r_i)(v_1)=-1.
\end{equation}

Similarly, taking inner product of \eqref{commutij} with $v_1$ and $v_k,\; k\geq 2$, we get from \eqref{theta1iijP} and the third equation of \eqref{dv1dvi} that, for all $i,j\geq 2$, 
\begin{equation*}\label{drivk}
\left\{\begin{array}{l}
r_ir_j\left(\,<dv_j(v_i),v_1>-<dv_i(v_j),v_1>\,\right)= r_ir_j\left(-<dv_1(v_i),v_j>+ <dv_1(v_j),v_i>\,\right)=0, \\
r_idr_j(v_i)\delta_{jk}-r_jdr_i(v_j)\delta_{ik}=0. 
\end{array}\right.
\end{equation*}
Therefore, $dr_k(v_i)=0$ for  $i\neq k, \, i,k\geq 2$. Now, since we also want the first equality of \eqref{ry1n} to be satisfied, i.e.,  $dr_k(v_k)=0$,  we conclude that   $[r_iv_i,r_jv_j]=0$ whenever  $dr_k(v_i)=0$  for all $i,k\geq 2$.
Since $d (\log r_i)= d (\log r_i)(v_1)\theta_1+\sum_{j=2}^n d (\log r_i)(v_j)\theta_j$, we conclude from 
\eqref{dlogriv1}   that $d(\log r_i)=-\theta_1$.     Therefore,    it follows  from \eqref{Gn} that  
\[
r_i=c_i\exp(-G),
\] 
for some constant $c_i>0$, i.e., the functions $r_i$ exist satisfying \eqref{ry1n}. Therefore, there exist coordinates $y_1,... y_n$ locally defined such that  $\partial/\partial y_1=v_1$ and $\partial/\partial y_i=r_iv_i
$, where $r_i$  is a funcion of $y_1$ only.

\end{proof}

 In order to prove Theorem \ref{closedformL} we will  use  Theorem \ref{Theotheta1iij} that says that there are special vector fields on  a Riemannian manifold of constant negative sectional curvature. Therefore we need the  following basic result that shows how the dual and connection forms are affected under a change of orthonormal frame fields, on any Riemannian manifold.

\begin{proposition}\label{novoframe}
Let $(U\subset R^n,g)$ be an open subset $U$ with  a Riemannian metric $g$,   
and $e_1,...,e_n$ is  an orthonormal frame on $U$ with dual forms $\omega_1,...,\omega_n$ and connection forms $\omega_{ij}=-\omega_{ji}$. If $v_1,...,v_n$ is another orthonormal frame given by $v_i=\sum _{j=1}^n L_{ij}e_j$ where  $L(x)\in O(n)$
$x\in U$, then its dual forms $\theta_i$ and its connection forms $\theta_{ij}$ are given by 
\begin{equation}\label{thetaieij}
\theta_i=\sum _{j=1}^n L_{ij}\omega_j \quad \mbox{ and }\quad 
\theta_{ij}=(dL\, L^t)_{ij}+(LWL^t)_{ij}, 
\end{equation}
 where $L^t$ is the transpose of $L$ and $(W)_{ij}=\omega_{ij}$.
\end{proposition}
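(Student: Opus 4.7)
The plan is to verify the two formulas in \eqref{thetaieij} directly, using only the defining relations of a dual coframe and of the Levi-Civita connection on an orthonormal frame. The fact that $L(x) \in O(n)$ (so $L^t = L^{-1}$ and $d(LL^t)=0$) will do all the work.

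For the dual forms, the identity $\theta_i = \sum_j L_{ij} \omega_j$ is verified by pairing with the frame $v_k$. First I would compute
\begin{equation*}
\Big(\sum_j L_{ij}\omega_j\Big)(v_k) = \sum_j L_{ij}\omega_j\Big(\sum_m L_{km} e_m\Big) = \sum_{j,m} L_{ij}L_{km}\delta_{jm} = (LL^t)_{ik}=\delta_{ik},
\end{equation*}
so the 1-form $\sum_j L_{ij}\omega_j$ is indeed dual to $v_k$, and by uniqueness of the dual coframe it equals $\theta_i$.

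For the connection forms, I would use the vector-valued differential-form description recalled in the paper. With respect to the original frame, the connection forms $\omega_{jk}$ are characterized by $de_j = \sum_k \omega_{jk}\, e_k$ (this is equivalent, for an orthonormal frame, to the first structure equation together with skew-symmetry of $(\omega_{jk})$). Differentiating $v_i = \sum_j L_{ij} e_j$ as a vector-valued form and substituting $e_j = \sum_l L_{lj} v_l$ (because $L^{-1}=L^t$) gives
\begin{equation*}
dv_i = \sum_j dL_{ij}\, e_j + \sum_{j,k} L_{ij}\omega_{jk}\, e_k = \sum_l \Big( (dL\, L^t)_{il} + (LWL^t)_{il}\Big)\, v_l.
\end{equation*}
Reading off the coefficients against the basis $v_l$ identifies $\theta_{il} = (dL\, L^t)_{il}+(LWL^t)_{il}$, which is the second formula in \eqref{thetaieij}.

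As a consistency check I would verify that this matrix of 1-forms is skew-symmetric, as must be the case for a connection form on an orthonormal frame: differentiating $LL^t = I$ yields $dL\, L^t = -L\, dL^t = -(dL\, L^t)^t$, so $dL\, L^t$ is skew; and $LWL^t$ is skew because $W$ is. No real obstacle is expected — the whole argument is an application of the Leibniz rule plus $L^t = L^{-1}$ — the only thing to be careful about is the index convention identifying $(dL\, L^t)_{ij}$ with $\sum_j dL_{ij} L_{lj}$, and confirming that the Levi-Civita connection relation $de_j = \sum_k \omega_{jk} e_k$ is available in the vector-valued form formalism described at the start of Section~\ref{proofs}.
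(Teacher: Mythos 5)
Your proposal is correct and follows exactly the route the paper indicates (the paper only states that the result is ``a straightforward computation showing that $\theta_i(v_\ell)=\delta_{i\ell}$ and $\theta_{ij}=\langle dv_i,v_j\rangle$'', which is precisely what you carry out, with the skew-symmetry check as a sensible bonus). The only blemish is the stray index in your closing remark, where $(dL\,L^t)_{il}$ should be written $\sum_j dL_{ij}L_{lj}$ with the summation index not clashing with the free index.
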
  

The proof is a straightforward computation  showing that $\theta_i(v_\ell)=\delta_{i\ell}$ and $\theta_{ij}=<dv_i,v_j>$.

\vspace{.2in}

\begin{proof}[\bf Proof of Theorem \ref{closedformL}]  
The proof of Theorem \ref{closedformL} follows from Theorem \ref{Theotheta1iij} and Proposition \ref{novoframe}.  

\end{proof}

\section{Applications}\label{applications}

In this section, we provide some applications of the results stated in Section \ref{Mainresults}. 
\subsection{The two dimensional case}\label{apln2}
Theorem \ref{closedform} has been applied in order to obtain conservation laws for several PDEs for a real function $u(x,t)$ (or systems of PDEs) that describe pseudo-spherical surfaces (see \cite{CavalcanteTenenblat1988}, \cite{ChernTenenblat}). Such an equation is  characterized by the fact that its generic solutions define metrics, on open subsets of the plane,  
whose Gaussian curvature is constant $-1$, i.e., they define 1-forms $\omega_1$ and $\omega_2$ and 
the connection forms $\omega_{12}$ in terms of $u(x,t)$ and its derivatives which satisfy the 
structure equations
\begin{equation}\label{structure}
\begin{array}{l}
d\omega_1=\omega_2\wedge \omega_{21}\\
d\omega_2=\omega_1\wedge \omega_{12}\\ 
d\omega_{12}=\omega_1\wedge \omega_{2}
\end{array}
\end{equation}
 The metric is defined by $ds^2=\omega_1^2+\omega_2^2$.

 The conservation laws for the differential equations are  obtained 
 as a consequence of Theorem \ref{closedform} as follows. Consider the 1-forms $\omega_i$, $i=1,2$ and $\omega_{12}$ satisfying \eqref{structuren}. Then there are 
 orthonormal vector fields $e_1$ and $e_2$ whose dual forms are $\omega_1$ and $\omega_2$. 
 Moreover, Theorem \ref{closedform} says that there exist special  orthonormal frames $v_1$ and $v_2$ such that its dual forms $\theta_1$, $\theta_2$ and   its connection form 
 $\theta_{12}$ satisfy
 $\theta_{12}+\theta_2=0$, and in this case $\theta_1$ is a closed form.

 Observe that the frames $e_1,\; e_2$ and $v_1,\; v_2$ are related  by an angle function $\phi$,  namely
 \[
 \begin{array}{l}
 v_1=\cos \phi\, e_1-\sin\phi\, e_2,\\
 v_2=\sin \phi\, e_1+\cos\phi\, e_2, 
 \end{array}
 \]
 and hence the dual forms and the connection forms are related by 
 \begin{equation} \label{phin2}
 \begin{array}{l}
 \theta_1=\cos \phi\, \omega_1-\sin\phi\, \omega_2,\\
 \theta_2=\sin \phi\, \omega_1+\cos\phi \,\omega_2, \\
 \theta_{12}=\omega_{12}- d\phi.
 \end{array}
 \end{equation} 
Therefore,  it follows from \eqref{theta12} that $\phi$ is determined up to constants by the following equation 
\begin{equation}\label{dphin2} 
d\phi =\omega_{12} + \sin \phi\, \omega_1+\cos\phi \,\omega_2, 
\end{equation}
 and   the closed 1-form $\theta_1$ is given  in terms of $\phi$ by \eqref{phin2}, i.e., 
 \begin{equation}\label{closedn2}
\theta_1= \cos \phi\, \omega_1-\sin\phi\, \omega_2.
 \end{equation}

Assume  that, in a coordinate system  $(x,t)\in U\subset R^2$, the 1-forms 
are given as follows
\[
\omega_i=f_{i1}\,dx+f_{i2}\, dt \qquad \omega_{12}=f_{31}\,dx+f_{32}\, dt,  
\]
where $i=1,2$ and $f_{ij}(x,t)$ are differentiable functions. Then the angle function $\phi(x,t)$ is determined in terms of $f_{ij} $ by \eqref{dphin2},  i.e.,
  by the completely integrable  system of equations  
\begin{equation}\label{eqsolphi}
 \begin{cases}
\phi_x=f_{31}+f_{11} \sin\phi +f_{21} \cos\phi,\\
\phi_t=f_{32}+f_{12} \sin\phi +f_{22} \cos\phi.
\end{cases}
\end{equation}  
Moreover,  if $\phi(x,t)$ is any solution of \eqref{eqsolphi}, then  \eqref{closedn2} 
implies that 
\begin{equation}\label{closedn2fij}
(f_{11} \cos \phi-f_{21} \sin \phi)\, dx +(f_{12} \cos \phi-f_{22} \sin \phi)\, dt 
\end{equation}
is a closed form that provides a conservation law.  

Assume that moreover the functions $f_{ij}$ are analytic in a parameter $\eta$, then the angle  function  $\phi$  and the closed form  \eqref{closedn2fij} will also be analytic in $ \eta$. In this case, we consider $\phi=\sum_{j=0}^\infty \phi_j \eta^j$. 
Then the  Laurent expansion of \eqref{eqsolphi}  will provide the functions $\phi_j,\, j\geq 0$ and the expansion of \eqref{closedn2fij} will provide  infinitely many 
 closed forms, by considering each coefficient of the powers of $\eta$. 
 When one considers  solutions that are periodic on $x$ 
 or solutions that decay to zero when $x\rightarrow \pm \infty$, then one gets infinitely many conserved quantities.

 We mention recent results in \cite{KelmerTenenblat2022}  and  \cite{KelmerTenenblat2025}, where the conservation laws were 
  explicitly  given for a Pholmeyer-Lund Regge type system of equations and for a vector Short pulse equation.  In this paper, we apply  this procedure in detail 
 to the Camassa-Holm equation.
 
 \vspace{.2in}

 \noindent{\bf Example 1.}
Consider the Camassa-Holm equation    
\begin{equation}\label{eqCH}
u_t-u_{xxt}=uu_{xxx}+2u_xu_{xx}-3uu_x-mu_x. 
\end{equation} 
 whose generic solutions define metrics on open sets of the 
 plane $(x,t)$, whose Gaussian curvature is constant $-1$. In fact, in \cite{SilvaTenenblat2015}
 it was shown that considering the 1-forms
 \begin{eqnarray*}
&& \omega_i=f_{i1}dx+f_{i2}dt, \qquad i=1,2, \\ 
&& \omega_{12}=f_{31}dx+f_{32}dt,
 \end{eqnarray*}
where 
\[
\begin{array}{ll}
 f_{11}=h-1+\frac{\eta^2}{2}\qquad & f_{12}=-u(f_{11}+1)+\eta u_x-\frac{m}{2}-\frac{\eta^2}{2}+1, \\
f_{21}=\eta & f_{22}=u\eta+u_x-\eta, \\
f_{31}=h+\frac{\eta^2}{2}& f_{32}= \, -u f_{31} +\eta u_x-u-\frac{m}{2} -\frac{\eta^2}{2}, 
 \end{array}
\]
and $h(x,t)=u-u_{xx}+m/2$, the metric defined by $ds^2=\omega_1^2+\omega_2^2$, where $\omega_{12}$ is the connection form,  has Gaussian curvature -1 . More precisely, the structure equations \eqref{structure} are satisfied if, and only if,  the function $u(x,t)$ satisfies 
\eqref{eqCH}. 

Since the functions $f_{ij}$ are analytic in $\eta$, it follows that the angle function $\phi$  satisfying  \eqref{eqsolphi} is also  analytic in $\eta$ and we can  consider $\phi=\sum_{j=0}^\infty \phi_j \eta^j$. Therefore, the closed 1-form  \eqref{closedn2fij} is also analytic in $\eta$. 
By considering each coefficient of $\eta$ in \eqref{eqsolphi},  we get the integrable system of differential equations that provide the functions $\phi_j,\, j\geq 0$ and the  Laurent expansion of \eqref{closedn2fij} provides infinitely many closed forms considering the coefficients of the powers of $\eta$. 

More precisely, from the Laurent expansion of \eqref{eqsolphi}, considering the coefficient independent of $\eta$,  the following system is integrable for $\phi_0$ (i.e.  the mixed derivatives commute),  for any solution $u$ of \eqref{eqCH}. 
\[
\left\{ 
\begin{array}{l}
\phi_{0,x}=(h-1)\sin \phi_0+h,\\
\phi_{0,t}=\cos \phi_0 u_x-\sin\phi_0 (uh+\frac{m}{2}-1)-u(h+1)-\frac{m}{2}.
\end{array}
   \right.
\]
Therefore, there exists a unique $\phi_0$ for a given initial condition $\phi_0(x_0,t_0)$.
Moreover, from \eqref{closedn2fij} we have a closed form for \eqref{eqCH}   given by 
\begin{equation}\label{firstCH}
 \cos\phi_0 (h-1)\, dx +[ \cos\phi_0(-uh+1-m/2)-u_x\sin\phi_0]\, dt. 
\end{equation}

Similarly, considering $u$ and $\phi_0$ as above,  the following system is integrable for $\phi_1$  
\[
\left\{ 
\begin{array}{l}
\phi_{1,x}=[(h-1)\phi_1+1]\cos \phi_0,\\
\phi_{1,t}=\,-[ (uh-1+m/2)\cos\phi_0+u_x\sin\phi_0]\phi_1-(u+1)\cos\phi_0+u_x(\sin\phi_0+1), 
\end{array}
   \right.
\]
and we have a second closed form for \eqref{eqCH}  given by 
\begin{equation}\label{secondCH}
-\{ [(h-1)\phi_1+1]\sin\phi_0 \}\, dx 
+\{ u_x(\phi_1+1)\cos\phi_0+[(uh+m/2-1)\phi_1+u+1]\sin\phi_0\} \, dt.
\end{equation}

We observe that this procedure goes on for all coefficients of $\eta$ in the Laurent expansion,  providing infinitely many closed forms for the Camassa Holm equation 
 and hence conserved quantities in time when the functions are periodic in $x$ or decay appropriately  when $x\rightarrow \infty$.

\subsection{Higher dimensional case}
We  will now show  how Theorems \ref{Theotheta1iij} and \ref{closedformL}  can be applied in the higher dimensional context. We consider a system of PDEs for functions with $n$ independent 
 variables $(x_1,...,x_{n})$, $n\geq 2$,  whose generic solutions define  Riemannian metrics  on open sets 
 of $R^n$ such that the sectional curvature is constant $-1$. This means that we have 1-forms $\omega_1,...,\omega_n$ and connection 1-forms $\omega_{ij} =-\omega_{ji}$, $1\leq i,j\leq n$ given in terms 
 of the solutions of the PDEs and its derivatives, such that the following structure of equations are satisfied
\begin{equation}\label{structuren}
\begin{array}{l}
  d\omega_i=\sum_{j\neq i, \,j=1}^n\omega_j\wedge \omega_{ji}, \\ \\ 
  d\omega_{ij}=\sum_{k=1}^n\omega_{ik}\wedge \omega_{kj} + \omega_i\wedge \omega_j. 
\end{array}
\end{equation}
 The metric is defined by $ds^2=\sum_{i}^n\omega_i^2$, where $\omega_i$ $i=1,...n$, are linearly independent. 
 
 \vspace{.1in}
 
The following result shows how to obtain conservation laws from a closed 1-form in 
higher dimensions.
 
\vspace{.2in}

\begin{proposition}\label{conservedn}
Consider local coordinates $x=(x_1, x_2,...,x_n)\in U\subset R^n$, $n>2$ and  a 1-form  
 $
\theta=\sum_{j=1}^n f_{j}(x) dx_j,  
$    
 where $f_{j}$ are differentiable functions of $x$.  Assume that $x_1$ is the time variable that is denoted by $t$. If $\theta$ is a closed form 
 then  for each $j\geq 2$, the $(n-1)-$forms
 \begin{equation}\label{closedn-1}
 \begin{array}{l}
  \psi_2=\theta \wedge dx_3 \wedge dx_4 \wedge \ldots \wedge dx_n,\\  
 \psi_3=\theta\wedge dx_2\wedge dx_4\wedge\ldots \wedge dx_n,\\
\vdots \\ 
 \psi_n=\theta\wedge dx_2 \wedge \ldots \wedge dx_{n-1}
 \end{array}
 \end{equation}
 are conservation laws and hence 
\begin{equation}
\frac{\partial f_{1}}{\partial x_j}-\frac{\partial f_{j}}{\partial t}=0, \qquad \forall j\geq 2.  
\label{indepjt}
\end{equation} 
  Moreover, if the functions $f_j$ are analytic in  a parameter $\eta$, then the closed form may provide infinitely many conservation laws. 
\end{proposition}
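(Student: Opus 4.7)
The proof rests on elementary exterior calculus, and I would organize it in three short steps.

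First, I would read off \eqref{indepjt} directly from closedness. Expanding
$$ 0=d\theta=\sum_{1\le i<k\le n}\left(\frac{\partial f_k}{\partial x_i}-\frac{\partial f_i}{\partial x_k}\right)dx_i\wedge dx_k $$
gives $\partial f_k/\partial x_i=\partial f_i/\partial x_k$ for every pair, and the case $i=1=t$, $k=j\geq 2$ is precisely \eqref{indepjt}.

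Next, I would verify that each $(n-1)$-form $\psi_j$ is closed, which is the defining property of a conservation law in dimension $n$. Write $\alpha_j=dx_2\wedge\cdots\wedge\widehat{dx_j}\wedge\cdots\wedge dx_n$, so that $\psi_j=\theta\wedge\alpha_j$. Since $\alpha_j$ is a wedge of exact $1$-forms, $d\alpha_j=0$, and the Leibniz rule yields
$$ d\psi_j=d\theta\wedge\alpha_j-\theta\wedge d\alpha_j=0. $$
Expanding $\theta\wedge\alpha_j$ in the sum $\theta=\sum_k f_k\,dx_k$, only the contributions from $f_1\,dx_1$ and $f_j\,dx_j$ survive, since every other $dx_k$ already appears in $\alpha_j$. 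Thus $\psi_j$ takes the familiar density--flux shape with $f_j$ as the time density and $f_1$ as the spatial flux, and the scalar content of $d\psi_j=0$ is exactly the $j$-th entry of \eqref{indepjt}; in particular the two halves of the conclusion are equivalent.

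Finally, for the analytic dependence on $\eta$, I would expand $f_k=\sum_{m\ge 0}f_k^{(m)}\eta^m$. Because $d\theta=0$ is a formal identity in $\eta$, each coefficient $\theta^{(m)}=\sum_k f_k^{(m)}\,dx_k$ is itself a closed $1$-form, so applying the first two steps to every $\theta^{(m)}$ produces an entire family of conservation laws $\psi_j^{(m)}$---infinite whenever infinitely many $\theta^{(m)}$ are nontrivial.

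The whole argument is really just the Poincar\'e lemma combined with the Leibniz rule, so there is no substantive obstacle. The only real care required is tracking the sign $(-1)^{j-2}$ that arises when reordering $dx_j\wedge\alpha_j$ into canonical form, which I would carry out explicitly only to make the identification of $d\psi_j=0$ with \eqref{indepjt} completely transparent.
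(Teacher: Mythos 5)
Your proposal is correct and follows essentially the same route as the paper: both arguments reduce to the Leibniz-rule computation $d\psi_j=d\theta\wedge dx_2\wedge\cdots\wedge\widehat{dx_j}\wedge\cdots\wedge dx_n$, observe that only the $dt$ and $dx_j$ components of $\theta$ survive the wedge so that $d\psi_j=0$ is equivalent to the single relation \eqref{indepjt}, and obtain the infinite family by expanding the closed form coefficientwise in $\eta$. The only cosmetic difference is that you extract \eqref{indepjt} directly from $d\theta=0$ and then note its equivalence with $d\psi_j=0$, whereas the paper reads it off from the explicit expansion of $d\psi_j$; the content is identical.
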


\begin{proof}
Since $\theta= f_{1}(x)\, dt+ \sum_{\ell=2}^n f_{\ell}(x) dx_\ell,$ is a closed form, it follows that 
for all $j\geq 2$, $\psi_j$ is also a closed form. 
 \begin{eqnarray*}
d\psi_j&=&d\theta\wedge d x_2\wedge\ldots \wedge dx_{j-1}\wedge dx_{j+1} \wedge\ldots dx_n\\
  &=&
 \sum_{\ell=2}^n\left( \frac{\partial f_1}{\partial x_\ell}dx_\ell\wedge dt+
\frac{\partial f_j}{\partial x_\ell }dx_\ell\wedge dx_j \right)
\wedge d x_2\wedge\ldots \wedge dx_{j-1}\wedge dx_{j+1} \wedge\ldots dx_n\\
&=& \left( \frac{\partial f_j}{\partial t} -\frac{\partial f_1}{\partial x_j}\right) 
dt\wedge dx_2\wedge\ldots\wedge dx_n.
\end{eqnarray*}
Since  $d\psi_j=0$ and $dt,\, dx_2, \ldots dx_n$ are linearly independent  we get 
that \eqref{indepjt} holds. 

 If the functions $f_j$ are analytic in a parameter $\eta$, then each coefficient of the Taylor expansion of $\theta$ in terms of $\eta$, provides a closed form and hence a conservation law. 

\end{proof}

{\bf Remarks:}
\begin{enumerate}
\item Proposition \ref{conservedn} shows that for each $j\geq 2$, $\int f_{j} dx_j$ is a conserved quantity in time, for functions $f_{1}$  
that decay appropriately when $x_i\rightarrow \infty$ for $i=2,...,s$  for some $s\leq n-1$  and 
$f_1$ is periodic on the remaining variables $x_j$, $j=s+1,...,n$, i.e., $f_1$  is defined on 
  $R\times R^s \times T^{n-s-1} $.    
In fact, it follows from \eqref{indepjt} that 
\[
\frac{\partial }{\partial t}\int f_{j}dx_j= \int \frac{\partial f_{1} }{\partial x_j}dx_j=0, 
\] 
i.e. \eqref{indepjt} provides $n-1$ conserved quantities.
\item The closed form $\theta$ of Proposition \ref{conservedn},  besides implying  \eqref{indepjt}, 
it also shows that for $i,j\geq 2$ and $i<j$,  the functions $f_i$ and  $f_j$ satisfy the following relations
\[
\frac{\partial f_{j}}{\partial x_i}-\frac{\partial f_{i}}{\partial x_j}=0.  
\]

\end{enumerate}

\vspace{.2in}

As an  application of Theorem \ref{closedformL}, we will show how to obtain  conservation laws for the Intrinsic Generalized sine-Gordon (the sine-Gordon equation when $n=2$).  

\vspace{.2in}

\noindent {\bf Example 2.} 
The {\em Intrinsic Generalized sine-Gordon } (IGSGE) is a  system of second order differential equations for a unit vector field  $V(x)=(V_1(x),V_2(x),...,V_n(x))$,  $x\in U\subset R^n$ that can be regarded as a first order system of equations for the pair $\{ V, h \} $, where $h$ is an off-diagonal   $(n\times n)$-matrix valued function determined  by the first derivatives of $V$ (when $V_i$ do not vanish), see the second equation in \eqref{IGSGE}), 
given by 
\begin{eqnarray}
&& VV^t=1, \nonumber\\
&& \frac{\partial V_i}{\partial x_j}=V_j h_{ji}, \nonumber \\
&& \frac{\partial  h_{ij}}{\partial x_i } + \frac{\partial h_{ji} }{\partial x_j }
 +\sum_{s\neq i,j} h_{si}h_{sj}=V_iV_j, \qquad  i\neq j, \label{IGSGE}  \\
&&  \frac{\partial h_{ij}}{\partial x_s }=h_{is} h_{sj}, \qquad i,j,s \mbox{ distinct}. \nonumber 
\end{eqnarray} 
where $1\leq i,j,s\leq n$.  When $n=2$, by taking $V=(\cos u/2,\, \sin u/2)$, $u(x_1,\, x_2)$, 
the system \eqref{IGSGE} reduces to the sine-Gordon equation
\[
u_{x_1x_1}-u_{x_2x_2}=\sin u. 
\]
Explicit solutions of the IGSGE are given for example by 
\[
V_1=\tanh x_1, \qquad V_j=c_j \frac{1}{\cosh x_1}, \qquad \mbox{ where } x_1>0, \mbox{ and } 
\sum_{j=2}^n c_j^2=1.
\]
Other solutions for \eqref{IGSGE} can be found in \cite{Tenenblat} (Example b) page 142 and Proposition 3.1 on page 143).

\vspace{.2in}

We observe that considering a pair $\{V,h\}$ satisfying \eqref{IGSGE} such that $V_i$ do not vanish on an open subset $U\subset R^n$, then the unit vector field $V$ defines a Riemannian metric on $U$ with constant sectional curvature $-1$.  In fact, considering the one forms 
\begin{equation}\label{metric}
 \omega_i=V_i dx_i,\qquad 
 \omega_{ij}=h_{ij}dx_j-h_{ji}dx_i, 
\end{equation}
the metric is defined by $ds^2=\sum_{i=1}^n \omega_i^2$ and the 1-forms $\omega_i$ and the connection forms $\omega_{ij}$  satisfy the structure equations \eqref{structuren},  as a consequence of \eqref{IGSGE}. Observe that the third and fourth equations of the system  \eqref{IGSGE} are the Gauss equation of the metric.   

  
\begin{proof}[\bf Proof of Theorem \ref{conservationIGSGE} ]
As we have seen above, any solution $\{V,h\}$ of the IGSGE  defines a metric on an open subset $U\subset R^n$, whose sectional curvature is constant $-1$ by considering  the 1-forms 
$\omega_i$ and $\omega_{ij}$ defined by \eqref{metric}. 

Let $e_i, \;  i=1,...,n$ be the  orthonormal vector fields whose dual forms are $\omega_i$. 
It follows from Theorem \ref{closedformL}, that there exists a unique orthonormal frame  $v_i=L_{ij}e_j$,  $L(x)\in O(n)$, for a given initial condition $L(x^0)$, $x^0\in U$ such that for all $i,j\geq 2,\, i\neq j$, equations \eqref{dL1j} and \eqref{dLij} hold and in this case $\sum_{k=1}^n L_{1k}\omega_k$ is a closed form. Hence, considering 
\eqref{metric}, the system of equations  \eqref{dL1j} and \eqref{dLij} reduces to
\begin{eqnarray}
&& (dL\, L^t)_{1i}+(LWL^t)_{1i}+\sum_{k=1}^n L_{ik}V_k\, dx_k=0,\label{dL1jIGSGE}\\
&& (dL\, L^t)_{ij}+(LWL^t)_{ij}=0,  \label{dLijIGSGE}
\end{eqnarray}
where $i,j\geq 2,\, i\neq j$ and 
\begin{equation}\label{bigW}
W_{ij}=\omega_{ij}=h_{ij} dx_j- h_{ji} dx_i.
\end{equation}
In this case,   
\begin{equation}\label{closedformn}
\theta=\sum_{k=1}^n L_{1k}V_k dx_k
\end{equation}
 is a closed form. Hence,  it follows from Proposition 
\ref{conservedn} that, assuming  that $x_1=t$ is  a time variable, then for all $k\geq 2$, 
 the $(n-1)-$forms $\psi_k$ given by \eqref{closedn-1} are conservation laws.
and hence   
\begin{equation}\label{conservedquantity}
\frac{\partial (L_{1k}V_k)}{\partial t}-
\frac{\partial (L_{11}V_1)}{\partial x_k}=0. 
\end{equation}

\end{proof}

As we mentioned in Example 2, whenever $n=2$, the Intrinsic Generalized Sine-Gordon Equation reduces to the classical sine-Gordon equation $u_{x_1x_1}-u_{x_2x_2}=\sin u$, for a real valued function $u(x_1,x_2)$, when we consider  $V=(\cos (u/2),\sin (u/2))$. We want to exhibit the conservation law  given by Theorem \ref{conservationIGSGE} for the sine-Gordon equation.

In this case, for any solution $u(x_1,x_2)$ of the sine-Gordon equation, it follows from the second equation \eqref{IGSGE} that $h_{12}=u_{x_1}/2$ and 
$h_{21}=-u_{x_2}/2$. Therefore,  \eqref{metric} provides the metric and the connection form as   
\[
\omega_1=\cos\frac{u}{2}dx_1, \qquad \omega_2=\sin\frac{u}{2}, \qquad 
\omega_{12}=\frac{u_{x_1}}{2}dx_2+\frac{u_{x_2}}{2}dx_1.
\]
It follows from Theorem \ref{closedformL} that there exists an orthogonal matrix 
\[
L=\left( 
\begin{array}{cc}
\cos\phi & -\sin\phi\\
\sin\phi & \cos \phi
\end{array}
\right), 
\]
where $\phi(x_1,x_2)$, that satisfies \eqref{dLijIGSGE} and \eqref{dL1jIGSGE}.  
Then, \eqref{dLijIGSGE} cannot occur since $n=2$ and \eqref{dL1jIGSGE} occurs only for $i=2$ and it  reduces to 
\[
-d\phi+\omega_{12}+\sin\phi\cos\frac{u}{2}dx_1+\cos\phi\sin\frac{u}{2}dx_2 =0.
\]  
Since $d\phi=\phi_{x_1}dx_1+\phi_{x_2}dx_2$, it follows from the expression of $\omega_{12}$ that 
$\phi$ must satisfy
\begin{equation}\label{phix1phix2}
\phi_{x_1}=\frac{u_{x_2}}{2}+\sin\phi\cos\frac{u}{2} \qquad  \mbox{and}\qquad  
\phi_{x_2}=\frac{u_{x_1}}{2}+\cos\phi\sin\frac{u}{2}.
\end{equation}
This is an integrable system for $\phi$ since $u$ satisfies the sine-Gordon equation.
 For any solution  $\phi$ of \eqref{phix1phix2}, we get from \eqref{closedformn}  the closed form
\[
\theta=\cos\phi\cos\frac{u}{2} dx_1-\sin\phi\sin\frac{u}{2} dx_2.
\] 
By considering $x_1=t$ to be the time variable, we have the conservation law  
$\theta=\cos\phi\cos\frac{u}{2} dt-\sin\phi\sin\frac{u}{2} dx_2$.
Hence, $\int -\sin\phi\sin\frac{u}{2} dx_2$ is a conserved quantity whenever  the function $\cos\phi\cos(u/2)$  is either periodic in $x_2$ or it decays appropriately when $x_2$ tends to infinity.    

\vspace{.2in}

\noindent {\bf Remark.} We observe that  in Theorem \ref{conservationIGSGE},  whenever the metric and  the connection forms associated to the solutions of the IGSGE  
are   analytic in a parameter,  then \eqref{conservedquantity} may provide infinitely many conservation laws  in time. This occurs very often in the 2-dimensional case as one can see in the literature of differential equations that describe pseudo-spherical surfaces (see  \cite{ChernTenenblat}-\cite{SilvaTenenblat2015}).  
 
 \vspace{.1in}

\noindent{\bf Acknowlegments:} 
This research was partially supported by Fonds zur F\"orderung der wissenschaft\-lichen Forschung, Austria grant  I-5015N ``Banach Poisson--Lie groups and integrable systems''.
The authors would like to acknowledge the excellent working conditions and interactions at Erwin Schr\"odinger International Institute for Mathematics and Physics, Vienna, during the thematic programme "Infinite-dimensional Geometry: Theory and Applications" where part of this work was completed.
This research was also partially supported by the Brazilian Ministry of Education, grant DGP/UnB 001/2024 CAPES-Print, while the second author was visiting 
the Mathematics Department at the Universidade de Bras\'\i lia, Brazil, where this work was initiated.


\end{document}